\newtheorem{theorem}{Theorem}[section]
\newtheorem{claim}{}[theorem]
\newtheorem{lemma}[theorem]{Lemma}
\newtheorem{corollary}[theorem]{Corollary}
\newtheorem{conjecture}[theorem]{Conjecture}
\theoremstyle{definition}
\newcommand{\bR}{\mathbb R}
\newcommand{\bZ}{\mathbb Z}
\newcommand{\cF}{\mathcal{F}}
\newcommand{\cL}{\mathcal{L}}
\newcommand{\cU}{\mathcal{U}}
\DeclareMathOperator{\si}{si}
\DeclareMathOperator{\cl}{cl}
\DeclareMathOperator{\PG}{PG}
\DeclareMathOperator{\GF}{GF}
\DeclareMathOperator{\AG}{AG}
\newcommand{\del}{\setminus \!}
\newcommand{\con}{/}
\newcommand{\gbinom}[3]{{{#1} \brack {#2}}_{#3}}
\newcommand{\qbinom}[2]{\gbinom{#1}{#2}{q}}
\newcommand{\tqbinom}[2]{\textstyle{\qbinom{#1}{#2}}}
\author{Peter Nelson}
\address{Department of Combinatorics and Optimization,
University of Waterloo, Waterloo, Canada}
\thanks{ This research was partially supported by a grant from the
Office of Naval Research [N00014-12-1-0031].}
\title[The number of flats in a matroid]{The Number of rank-$k$ Flats in a Matroid with no $U_{2,n}$-minor}
\begin{document}
\begin{abstract}
We show that, if $k$ and $\ell$ are positive integers and $r$ is sufficiently large, then the number of rank-$k$ flats in a rank-$r$ matroid $M$ with no $U_{2,\ell+2}$-minor is less than or equal to number of rank-$k$ flats in a rank-$r$ projective geometry over $\GF(q)$, where $q$ is the largest prime power not exceeding $\ell$. 
\end{abstract}
\maketitle

\section{Introduction}

Let $W_k(M)$ denote the number of rank-$k$ flats in a matroid $M$. For example, we have $W_k(\PG(r-1,q)) = \qbinom{r}{k}$, the $q$-binomial coefficient for $r$ and $k$. The following conjecture appears in Oxley [\ref{oxley} p. 582], attributed to Bonin:

\begin{conjecture}\label{mainconj}
	If $q$ is a prime power, $k \ge 0$ is an integer and $M$ is a rank-$r$ matroid with no $U_{2,q+2}$-minor, then $W_k(M) \le \qbinom{r}{k}$. 
\end{conjecture}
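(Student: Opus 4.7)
My plan is to induct on the rank $r$, using the standard $q$-binomial identity
\[
\qbinom{r}{k} = \qbinom{r-1}{k-1} + q^k \qbinom{r-1}{k}.
\]
The cases $k \in \{0,r\}$ are trivial and $k = 1$ is Kung's theorem, so I assume $2 \le k \le r-1$ and that the bound holds for every matroid of rank strictly less than $r$ with no $U_{2,q+2}$-minor.

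Fix a non-loop element $e$ of $M$; loops and coloops can be split off via direct sums. I partition the rank-$k$ flats of $M$ into those containing $e$ and those not. The rank-$k$ flats through $e$ correspond bijectively to the rank-$(k-1)$ flats of $M/e$, and since $M/e$ has rank $r-1$ and excludes the same minor, induction yields at most $\qbinom{r-1}{k-1}$ of them. Each rank-$k$ flat $F$ not containing $e$ extends uniquely to a rank-$(k+1)$ flat $F^* = \cl_M(F \cup \{e\})$ through $e$, and the distinct $F^*$'s are exactly the rank-$(k+1)$ flats of $M$ containing $e$, of which there are at most $\qbinom{r-1}{k}$, again by induction on $M/e$. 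It therefore suffices to show, for each such $F^*$, that at most $q^k$ of its rank-$k$ subflats avoid $e$; summing then gives $W_k(M) \le \qbinom{r-1}{k-1} + q^k \qbinom{r-1}{k} = \qbinom{r}{k}$.

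The crux is thus the following sub-claim: for any rank-$(k+1)$ matroid $N$ with no $U_{2,q+2}$-minor and any non-loop element $e$, the number of hyperplanes of $N$ avoiding $e$ is at most $q^k$, i.e.\ $W_k(N) - W_{k-1}(N/e) \le q^k$. When $k+1 < r$, induction gives $W_k(N) \le \qbinom{k+1}{k} = \qbinom{k}{k-1} + q^k$, so the sub-claim would follow from the matching lower bound $W_{k-1}(N/e) \ge \qbinom{k}{k-1}$. This is precisely where I expect the argument to stall: no such lower bound holds unconditionally, since if $N/e$ is close to a free matroid $U_{k,k}$ it has far fewer rank-$(k-1)$ flats than a projective geometry of the same rank. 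Any correct proof must trade a deficit of flats in $N/e$ against a corresponding deficit of hyperplanes of $N$, which the naive partitioning does not do. The case $k = r-1$ is even worse: then $F^* = E(M)$ and the sub-claim coincides with the statement being proved, so a genuinely different device is needed for hyperplanes.

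To close these gaps I would appeal to stability-type consequences of the Growth Rate Theorem of Geelen--Kung--Whittle: when $r$ is large, any matroid with no $U_{2,q+2}$-minor that comes close to the conjectural extremum for $W_k$ must be structurally close to $\PG(r-1,q)$, so each relevant contraction remains dense and the required lower bounds hold at least approximately. This is presumably why the abstract settles for an asymptotic statement, valid only for $r$ sufficiently large and with $q$ only required to be the largest prime power below $\ell$; a proof for all $r$ would seem to require a delicate analysis of small-rank extremal configurations that density considerations alone do not resolve, which I expect is the principal reason the full conjecture has remained open.
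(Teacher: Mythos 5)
The statement you are attempting to prove is Conjecture~\ref{mainconj}, and it is \emph{false}; the paper itself disproves it. Lemma~\ref{counterexample} (due to Blokhuis) constructs, for each prime power $q$, a rank-$3$ matroid $M(q)$ with no $U_{2,2q}$-minor satisfying $W_2(M(q)) = \tfrac{1}{2}q^2(q+1)$, and the subsequent corollary shows this gives a rank-$3$ matroid with no $U_{2,q+2}$-minor and strictly more than $\qbinom{3}{2}$ lines for every prime power $q > 125$ (the paper remarks that in fact all $q \ge 13$ work). Your induction on rank therefore cannot be repaired: it would have to pass through the case $r = 3$, $k = 2$, which already fails, and no amount of cleverness in the inductive step can rescue a false base case. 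Your closing assessment that ``the full conjecture has remained open'' is thus incorrect; it is refuted, and the paper proves only the asymptotic version recorded as Theorem~\ref{main}, with $r$ required to be large relative to $\ell$ and $k$.

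That said, your diagnosis of where the naive partitioning argument stalls is accurate and does correspond to the genuine difficulty. The paper's proof of Theorem~\ref{main} moves in roughly the direction you gesture at: a density Hales--Jewett type theorem (Theorem~\ref{hj}, rather than the Growth Rate Theorem, though the two belong to the same circle of ideas) produces a large $\AG(n,q)$-restriction inside any minor-minimal dense counterexample, once high cocircuit rank has been established (claim~\ref{strongconn}). Lemma~\ref{win} then forces every rank-$(k+1)$ restriction of the structured minor $N$ to be $\GF(q)$-representable, which is exactly what legitimizes the local bound $W_k^e(N|F) \le q^k$ you wanted. But the paper never establishes the lower bound $W_{k-1}(N \con e) \ge \qbinom{k}{k-1}$ that your sketch called for. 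Instead, claim~\ref{contractlowerbound} derives the same identity $W_k(N) \le W_{k-1}(N \con e) + q^k W_k(N \con e)$ and combines it with the inductive \emph{upper} bound on $W_{k-1}(N \con e)$ and the density hypothesis $W_k(N) > \qbinom{r(N)}{k}$ to obtain a \emph{lower} bound on $W_k(N \con e)$; this propagates density to contractions, and minimality then forces the affine geometry restriction to be spanning, whereupon Lemma~\ref{spanningwin} gives a direct count and a contradiction. So the same inequality appears, but is used in the opposite direction and in service of a contradiction rather than an inductive verification.
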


Unfortunately for $k = 2$, $r = 3$ this conjecture is false for all $q \ge 13$; we discuss counterexamples due to Blokhuis (private communication) soon. Our main theorem, on the other hand, resolves the conjecture whenever $r$ is large compared to $q$ and $k$. In fact we show more, obtaining an eventually best-possible bound on $W_k$ when excluding an arbitrary rank-$2$ uniform minor: 

\begin{theorem}\label{main}
	Let $\ell \ge 2$ and $k \ge 0$ be integers. If $r$ is sufficiently large and $M$ is a rank-$r$ matroid with no $U_{2,\ell+2}$-minor, then $W_k(M) \le \qbinom{r}{k}$, where $q$ is the largest prime power so that $q \le \ell$.\end{theorem}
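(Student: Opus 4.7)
I would attempt to prove the theorem by induction on $k$. The case $k = 0$ is immediate ($W_0(M) = 1 = \qbinom{r}{0}$), and the case $k = 1$ is the statement that $W_1(M) \le \qbinom{r}{1}$ for $r$ large, a density theorem refining the classical Kung bound by sharpening $\ell$ to the largest prime power $q \le \ell$ once the rank is large.

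For the inductive step I would use an incidence count between points and rank-$k$ flats. For each point $p$ of $M$, the rank-$k$ flats of $M$ containing $p$ correspond bijectively to the rank-$(k-1)$ flats of $M/p$, and $M/p$ is a rank-$(r-1)$ matroid with no $U_{2,\ell+2}$-minor. Applying the inductive hypothesis to each $M/p$ and the $k=1$ case to $M$ itself,
\[
\sum_{F \,:\, r_M(F)=k} |F| \;=\; \sum_p W_{k-1}(M/p) \;\le\; W_1(M)\cdot \qbinom{r-1}{k-1} \;\le\; \qbinom{r}{1}\qbinom{r-1}{k-1} \;=\; \qbinom{k}{1}\qbinom{r}{k},
\]
where the final equality is the standard $q$-binomial identity expressing point-flat incidence counts in $\PG(r-1,q)$.

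If every rank-$k$ flat of $M$ had exactly $\qbinom{k}{1}$ points (the maximum possible by applying Kung to the restriction), then dividing would give $W_k(M) \le \qbinom{r}{k}$ immediately. The main obstacle is controlling \emph{sparse} rank-$k$ flats: a rank-$k$ flat can have as few as $k$ elements, so the trivial bound $|F|\ge k$ only yields the much too weak $W_k(M) \le \qbinom{k}{1}\qbinom{r}{k}/k$. To handle sparse flats I would invoke a stability-type dichotomy that becomes available once $r$ is large: either $M$ is close, in a quantitative sense, to $\PG(r-1,q)$, in which case most rank-$k$ flats are full and the incidence count above is essentially tight; or $M$ deviates substantially from any projective geometry, in which case one can improve the density bound $W_1(M) \le \qbinom{r}{1}$ strictly enough to absorb the deficiency from the sparse flats. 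Making this dichotomy quantitative enough for the two error terms to balance is, I expect, the technical heart of the proof, and is what ultimately forces the hypothesis that $r$ be sufficiently large in terms of $k$ and $\ell$.
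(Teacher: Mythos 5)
Your incidence count $\sum_p W_{k-1}(M/p) \le W_1(M)\qbinom{r-1}{k-1}$ and the $q$-binomial identity $\qbinom{r}{1}\qbinom{r-1}{k-1} = \qbinom{k}{1}\qbinom{r}{k}$ are both correct, and you have correctly identified the obstacle: without a lower bound on the sizes of rank-$k$ flats, the argument is lossy by a factor of roughly $k$. But the ``stability dichotomy'' you invoke to close this gap is not a known theorem, is not formulated precisely enough to check, and in my judgement does not exist in the form you need. The difficulty is that ``$M$ deviates substantially from $\PG(r-1,q)$'' does not obviously force a quantitative improvement on $W_1(M)$ proportional to the number of sparse rank-$k$ flats: a matroid in $\cU(\ell)$ could have $W_1$ close to $\qbinom{r}{1}$ while still possessing a significant population of small rank-$k$ flats, and you give no mechanism ruling this out. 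This is not a detail to be ironed out later; it is where essentially all of the work lies, and your proposal contains no idea for how to do it.

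The paper's proof is structurally quite different. It does use a density-to-structure dichotomy, but in the form of a matroid density Hales--Jewett theorem (Theorem~\ref{hj}): any $M \in \cU(\ell)$ with $W_1(M) \ge \alpha q^{r(M)}$ and rank sufficiently large contains an $\AG(n,q)$-restriction (or a $\PG(n,q')$-minor over a larger field, which yields the forbidden uniform minor directly). Starting from a minor-minimal counterexample, the paper first deduces high point-density from $W_k(M) \le W_1(M)^k$, invokes Theorem~\ref{hj} to produce a large affine geometry restriction, proves a cocircuit-rank lower bound so that a further minor can be taken in which the affine restriction is spanning, and finally applies a purely geometric lemma (Lemma~\ref{spanningwin}) showing that a matroid with a spanning $\AG(r-1,q)$-restriction and no $U_{2,q^2+1}$-minor satisfies the bound exactly. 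Sparse flats are never controlled by a global averaging argument; instead, the minimal counterexample is shown to have all its rank-$(k+1)$ restrictions $\GF(q)$-representable (via Lemma~\ref{win}), so that the local recursion $W_k(N) \le W_{k-1}(N\con e) + q^k W_k(N\con e)$ from Lemma~\ref{sigmaproperties}(\ref{sp4}) holds with the projective constant $q^k$ rather than the crude bound $\ell^{k(k+1)}$. In short, your counting identity resembles a summed version of the paper's recursion, but the paper earns the right to use it with the tight constant $q^k$ by first establishing a strong structural theorem, rather than by appealing to an unproven deficiency-absorbing dichotomy.
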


This was shown for $k = 1$ in [\ref{gn}]. The bound is attained by projective geometries over $\GF(q)$, so cannot be improved.  

Our theorem does not resolve Conjecture~\ref{mainconj} in the case where $r$ is not too large compared to $k$; in particular, the conjecture remains open in the interesting case when $k = r-1$ (that is, where $W_k$ is the number of hyperplanes).

 We now discuss the counterexamples for $r = 3$ and $k = 2$, first giving the construction of Blokhuis. For each simple rank-$3$ matroid $M$, let $\cL^+(M)$ be the set of lines of $M$ containing at least $3$ points. Note that $\cL^+(M)$ determines $M$.

\begin{lemma}\label{counterexample}
	If $q$ is a prime power then there is a rank-$3$ matroid $M(q)$ with no $U_{2,2q}$-minor such that $W_2(M(q)) = \tfrac{1}{2}q^2(q+1)$. 
\end{lemma}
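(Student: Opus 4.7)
The plan is to exhibit $M(q)$ as a rank-$3$ paving matroid obtained from the affine plane $\AG(2,q)$ by ``breaking'' a single parallel class. Take $E = \GF(q)^2$ and let $\cL_0, \cL_1, \ldots, \cL_q$ be the $q+1$ parallel classes of affine lines of $\AG(2,q)$, each consisting of $q$ lines of $q$ points each. Set $\cL^+ = \cL_1 \cup \cdots \cup \cL_q$, a family of $q^2$ affine lines of size $q$, any two of which intersect in at most one point (parallel lines are disjoint; non-parallel lines meet in exactly one point). By the standard paving matroid construction, there is a rank-$3$ matroid $M(q)$ on $E$ whose rank-$2$ flats of size at least $3$ are precisely the members of $\cL^+$ of size at least $3$ (for $q=2$ this yields $M(2)=U_{3,4}$, while for $q \ge 3$ all $q^2$ members of $\cL^+$ are long lines of $M(q)$).

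To compute $W_2(M(q))$, classify the rank-$2$ flats into two types: (i) members of $\cL^+$, contributing $q^2$ flats; and (ii) $2$-element flats $\{p,p'\}$, which occur exactly when $\{p,p'\}$ is not contained in any line of $\cL^+$. Since any two points of $\AG(2,q)$ lie on a unique affine line, the pairs not covered by $\cL^+$ are precisely the pairs of points on a common line of the broken class $\cL_0$; there are $|\cL_0| \cdot \binom{q}{2} = q \cdot \binom{q}{2} = \tfrac{1}{2}q^2(q-1)$ such pairs. Summing, $W_2(M(q)) = q^2 + \tfrac{1}{2}q^2(q-1) = \tfrac{1}{2}q^2(q+1)$, as required.

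For the minor condition, recall that any $U_{2,n}$-minor of a rank-$3$ matroid arises either as the restriction to a rank-$2$ flat of size at least $n$ or as the simplification of a single-element contraction having at least $n$ elements. Every rank-$2$ flat of $M(q)$ has size at most $q < 2q$, excluding the first source. For the second, a point $p \in E$ lies on exactly $q$ members of $\cL^+$ (one from each of $\cL_1, \ldots, \cL_q$) and on exactly $q-1$ two-element rank-$2$ flats (one for each other point on the unique line of $\cL_0$ through $p$), so $p$ lies in exactly $2q-1$ rank-$2$ flats. Hence the simplification of $M(q)/p$ is $U_{2,2q-1}$, and $M(q)$ has no $U_{2,2q}$-minor. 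The only non-routine step is verifying the paving matroid construction, which follows immediately from the ``any two lines meet in at most one point'' condition above.
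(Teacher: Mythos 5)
Your proof is correct and takes essentially the same approach as the paper: both construct $M(q)$ from $\AG(2,q)$ by deleting a parallel class from the long lines (the paper phrases it as ``a set of $q$ pairwise disjoint lines,'' which in $\AG(2,q)$ is the same thing), compute $W_2 = q^2 + q\binom{q}{2}$, and bound the $U_{2,n}$-minors via the $2q-1$ lines through each point. Your write-up merely makes explicit the paving-matroid construction and the rank-$3$ characterization of $U_{2,n}$-minors that the paper leaves implicit.
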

\begin{proof}
 Let $N \cong \AG(2,q)$. Let $\cL$ be a set of $q$ pairwise disjoint lines of $N$. If $M(q)$ is the simple rank-$3$ matroid with $E(M(q)) = E(N)$ and $\cL^+(M(q)) = \cL^+(N) - \cL$, then $M(q)$ has $q^2 + q\binom{q}{2} = \tfrac{1}{2}q^2(q+1)$ lines and each element of $M(q)$ lies on $2q-1$ lines of $M(q)$, so $M(q)$ has no $U_{2,2q}$-minor.   
\end{proof}

We now verify that, when $r = 3$ and $k = 2$, Conjecture~\ref{mainconj} is false for nearly all $q$:
\begin{corollary}
	Let $q > 125$ be a prime power. There is a rank-$3$ matroid $M$ with no $U_{2,q+2}$-minor such that $W_2(M) > \qbinom{3}{2}$.
\end{corollary}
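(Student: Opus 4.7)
The plan is to apply Lemma~\ref{counterexample} to a prime power $q'$ chosen close to (but not exceeding) $(q+2)/2$: if $2q' \le q+2$ then the rank-$3$ matroid $M(q')$ from that lemma has no $U_{2,2q'}$-minor and hence no $U_{2,q+2}$-minor, while its rank-$2$ flat count $\tfrac12 (q')^2(q'+1)$, of order roughly $q^3/16$, will comfortably exceed $\qbinom{3}{2} = q^2+q+1$ once $q$ is large enough.

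To produce $q'$, I would invoke Bertrand's postulate in the form ``for every integer $n \ge 1$ there is a prime in $(n, 2n]$'', applied with $n = \lfloor (q+2)/4 \rfloor$. This yields a prime $q'$ satisfying $(q+2)/4 < q' \le (q+2)/2$; since $q'$ is in particular a prime power and $2q' \le q+2$, Lemma~\ref{counterexample} provides a rank-$3$ matroid $M(q')$ with no $U_{2,q+2}$-minor and
\[
W_2(M(q')) \;=\; \tfrac12(q')^2(q'+1) \;>\; \tfrac12(q')^3 \;>\; \tfrac{(q+2)^3}{128}.
\]

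It then suffices to verify $(q+2)^3 > 128(q^2+q+1)$ for $q > 125$; after expansion this reduces to the cubic inequality $q^3 - 122 q^2 - 116 q - 120 > 0$, which is a routine estimate (and in fact already holds for $q \ge 123$). I do not foresee any real obstacle: the construction is handed to us by Lemma~\ref{counterexample}, and the only quantitative ingredient is a prime-gap estimate, for which Bertrand's postulate leaves more than enough slack. Sharper prime-distribution results could be used to lower the threshold $125$ further, but this is not needed.
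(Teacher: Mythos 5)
Your argument is correct and follows the same route as the paper: choose a prime power $q'$ with $\tfrac14(q+2) < q' \le \tfrac12(q+2)$, apply Lemma~\ref{counterexample} to get a rank-$3$ matroid with no $U_{2,q+2}$-minor, and compare $\tfrac12(q')^2(q'+1)$ with $q^2+q+1$. The only cosmetic difference is that the paper produces $q'$ as a power of $2$ rather than via Bertrand's postulate, which avoids any appeal to prime-distribution results; both choices are valid since the lemma only requires a prime power.
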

\begin{proof}
	Let $q'$ be a power of $2$ so that $\tfrac{1}{4}(q+2) < q' \le \tfrac{1}{2}(q+2)$. Now $M(q')$ of Lemma~\ref{counterexample} has no $U_{2,2q'}$-minor so has no $U_{2,q+2}$-minor, and \[W_2(M(q')) = \tfrac{1}{2}(q')^2(q' + 1) > \tfrac{1}{128}(q+2)^3 \ge (q+2)^2 > q^2 + q + 1 = \tqbinom{3}{2}.\]
\end{proof}

	If more care is taken, then the same construction can in fact be shown to provide counterexamples for all $q \ge 13$. Smaller values of $q$ will be considered in detail in a future paper. 
	
	Despite these examples, it is likely that the rank-$3$ case is sporadic and that Conjecture~\ref{mainconj} holds unconditionally for all $r \ge 4$. We also conjecture a strengthened version of Theorem~\ref{main}, in which $r$ is not required to be large compared to $k$:

\begin{conjecture}
	Let $\ell \ge 2$ be an integer. If $r$ is sufficiently large and $M$ is a rank-$r$ matroid with no $U_{2,\ell+2}$-minor, then $W_k(M) \le \qbinom{r}{k}$ for all integers $k \ge 0$, where $q$ is the largest prime power such that $q \le \ell$. 
\end{conjecture}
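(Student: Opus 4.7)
The plan is to combine Theorem~\ref{main} with a structure theorem for matroids of large rank with no $U_{2,\ell+2}$-minor, and then to absorb the resulting error using the rapid growth of $\qbinom{r}{k}$ with $r$. Any $k$ bounded in terms of $\ell$ is immediately handled by Theorem~\ref{main}, since the threshold on $r$ there depends only on $\ell$ and $k$, and bounded $k$ reduces it to one depending on $\ell$ alone. The new content of the conjecture is thus entirely in the regime where $k$ grows with $r$.

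In that regime, I would seek a structure theorem of roughly the following shape, in the spirit of work by Geelen, Gerards, Whittle, and Nelson on the matroid minors project: there is a constant $c = c(\ell)$ such that every rank-$r$ matroid $M$ with no $U_{2,\ell+2}$-minor, for $r \ge r_0(\ell)$, differs by a rank-$c$ perturbation (interpreted via covers, lifts, or projections) from some matroid $M_0$ that embeds as a spanning restriction of $\PG(r-1,q)$. Granting such a result, each rank-$k$ flat of $M$ would correspond to at most $q^{O(c)}$ rank-$k$ flats of $M_0$, yielding
\[
W_k(M) \le q^{O(c)} \cdot W_k(M_0) \le q^{O(c)} \cdot \qbinom{r-c}{k}.
\]
Since the ratio $\qbinom{r}{k}/\qbinom{r-c}{k}$ grows as $q^{kc}$ when $r$ is large, the factor $q^{O(c)}$ is absorbed as soon as $k$ exceeds a constant depending on $\ell$, giving $W_k(M) \le \qbinom{r}{k}$ and meshing cleanly with the small-$k$ range already handled by Theorem~\ref{main}.

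The principal obstacle is the structure theorem itself: while the matroid minors project supplies the required machinery for $\GF(q)$-representable matroids, the class of matroids with no $U_{2,\ell+2}$-minor is strictly larger, and the structural results likely underlying Theorem~\ref{main} do not yet appear strong enough to furnish a rank-$c$ perturbation model of this precision. A secondary difficulty is that $\qbinom{r}{k}/\qbinom{r-c}{k}$ is smallest at the extremes $k=1$ and $k=r-1$; the hyperplane case $k=r-1$ in particular would likely demand a sharp stability theorem saying that any matroid whose $W_{r-1}$ approaches the extremal value is essentially a restriction of $\PG(r-1,q)$, a statement of independent interest that does not seem to follow from the presently available bounds.
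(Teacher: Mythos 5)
The statement you are asked to prove is labeled as a \emph{conjecture} in the paper, not a theorem: the author explicitly poses it as a strengthening of Theorem~\ref{main} in which $r$ need not be large relative to $k$, and offers no proof. There is therefore nothing in the paper against which to compare your argument, and your proposal should be judged as an attempt at an open problem.

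On its own terms, the proposal does not constitute a proof, and you candidly say as much. The decomposition into ``small $k$ handled by Theorem~\ref{main}, large $k$ handled by structure'' is a reasonable plan, but the entire weight of the large-$k$ regime rests on a hypothetical rank-$c$ perturbation theorem for matroids in $\cU(\ell)$, which you acknowledge is not available; without it nothing is established. Even granting such a theorem, two steps in the sketch are not sound as written. First, it is not true in general that a bounded-rank perturbation (in the lift/projection sense) changes the number of rank-$k$ flats by only a multiplicative constant: a single projection can merge or split flats in ways that are not controlled by a factor $q^{O(c)}$ uniformly over $k$, and no argument is given for the claimed bound. Second, the arithmetic is off: if $M_0$ is a \emph{spanning} restriction of $\PG(r-1,q)$ then it has rank $r$, so the natural bound is $W_k(M_0) \le \qbinom{r}{k}$, not $\qbinom{r-c}{k}$, and then the factor $q^{O(c)}$ pushes you strictly above the target rather than being absorbed; moreover $\qbinom{r-c}{k}$ is not even defined once $k > r-c$, which is precisely the hyperplane regime $k = r-1$ that you identify as the hard case. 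So the ``absorption'' step fails exactly where the conjecture's new content lies. The proposal is a plausible research program, but it has a genuine gap at its core (the missing structure theorem) and further gaps in the reduction from structure to the flat count.
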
	

\section{Preliminaries}

We follow the notation of Oxley [\ref{oxley}]. In particular for each integer $\ell \ge 0$ we write $\cU(\ell)$ for the class of matroids with no $U_{2,\ell+2}$-minor. 

The first theorem we need gives a bound on $W_1$ for all matroids in $\cU(\ell)$, and was proved by Kung [\ref{kungextremal}]. Note that this resolves the $k = 1$ case of Conjecture~\ref{mainconj}. 
\begin{theorem}\label{kung}
	If $\ell \ge 2$ and $M \in \cU(\ell)$, then $W_1(M) \le \frac{\ell^{r(M)}-1}{\ell-1}$. 
\end{theorem}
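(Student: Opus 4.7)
The plan is to induct on $r = r(M)$. The base cases $r = 0$ and $r = 1$ are immediate: $W_1(M) = 0$ and $W_1(M) = 1$ respectively, which match $(\ell^r - 1)/(\ell - 1)$.

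For $r \ge 2$, I would fix any non-loop element $e$ of $M$, let $F$ be the rank-$1$ flat containing $e$, and consider the contraction $M/e$. Since $\cU(\ell)$ is minor-closed, $M/e \in \cU(\ell)$, and $r(M/e) = r - 1$, so by induction $W_1(M/e) \le (\ell^{r-1}-1)/(\ell-1)$. The key correspondence is that rank-$1$ flats of $M/e$ correspond bijectively to rank-$2$ flats (lines) of $M$ containing $e$; equivalently, every rank-$1$ flat of $M$ other than $F$ lies on a unique line of $M$ through $e$.

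The excluded-minor hypothesis yields the per-line bound: any line $L$ of $M$ is a rank-$2$ restriction whose simplification is $U_{2,t}$, where $t$ is the number of rank-$1$ flats of $M$ contained in $L$; so $t \le \ell+1$, for otherwise $U_{2,\ell+2}$ would appear as a minor of $M$. Thus each line through $e$ contributes at most $\ell$ rank-$1$ flats besides $F$, and so
\[ W_1(M) \le 1 + \ell \cdot W_1(M/e) \le 1 + \ell \cdot \frac{\ell^{r-1}-1}{\ell-1} = \frac{\ell^r-1}{\ell-1}. \]
No step of this argument is technically delicate, and I do not anticipate a real obstacle; the only care required is in distinguishing elements from rank-$1$ flats (parallel classes) and in verifying that contraction preserves membership in $\cU(\ell)$, which is immediate from minor-closedness.
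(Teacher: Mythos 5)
Your proof is correct and is the standard argument — in fact, it is essentially Kung's original proof. Note, however, that the paper does not prove this statement at all: Theorem~\ref{kung} is cited as an external result from Kung's ``Extremal matroid theory'' [\ref{kungextremal}], so there is no proof in the paper to compare against. Your inductive argument (contract a nonloop $e$, observe that each line through $e$ contains at most $\ell+1$ rank-$1$ flats because its simplification is a rank-$2$ uniform matroid which must avoid $U_{2,\ell+2}$, and sum) is exactly how the bound is established in the literature, and the arithmetic $1 + \ell \cdot \frac{\ell^{r-1}-1}{\ell-1} = \frac{\ell^r-1}{\ell-1}$ is right. One small point worth making explicit if you were to polish this: the correspondence between rank-$1$ flats of $M/e$ and rank-$2$ flats of $M$ through $e$ requires $M$ loopless (or one should work with $\si(M)$ throughout), but since loops contribute nothing to $W_1$ this is harmless.
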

	We often use the cruder bound $W_1(M) < \ell^{r(M)}$. The next result, which provides a large affine geometry restriction in a dense matroid in $\cU(\ell)$ of very large rank, appears in [\ref{gn2}]. 

\begin{theorem}\label{hj}
	There is a function $f: \bZ^3 \times \bR \rightarrow \bZ$ such that, for all $n,\ell \in \bZ^+$, $\alpha \in \bR^+$ and prime powers $q$, if $M \in \cU(\ell)$ satisfies $W_1(M) \ge \alpha q^{r(M)}$ and $r(M) \ge f(\ell,n,q,\alpha)$, then $M$ has either an $\AG(n,q)$-restriction or a $\PG(n,q')$-minor for some $q' > q$. 
\end{theorem}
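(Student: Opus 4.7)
The plan is to combine the growth-rates theorem for $\cU(\ell)$ with a careful analysis of how the extra points of $M$ sit above a large projective-geometry minor. I would proceed in three steps.

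First, I would invoke the Geelen--Kabell--Kung--Whittle growth-rates theorem to extract a large $\PG(m, q'')$-minor of $M$, for some prime power $q''$ with $q'' \le \ell$ and some $m$ as large as required. For $M \in \cU(\ell)$ of sufficiently large rank, the hypothesis $W_1(M) \ge \alpha q^{r(M)}$ rules out the ``linear'' and ``polynomial'' regimes of the growth-rate hierarchy, so $M$ lands in the exponential regime and must contain a large projective geometry as a minor over some prime power $q''$ bounded by $\ell$.

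Next, I would show that $q'' \ge q$. If instead $q'' < q$, then a refined characteristic bound in the spirit of Theorem~\ref{kung}, applied to the subclass of $\cU(\ell)$-matroids with no $\PG$-minor over a field larger than $\GF(q'')$, would give $W_1(M) \le c\,(q'')^{r(M)}$, contradicting $W_1(M) \ge \alpha q^{r(M)}$ once $r(M)$ is large enough. If $q'' > q$ the desired conclusion is immediate, so the remaining case is $q'' = q$, on which everything hinges.

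Finally, I would argue that a matroid $M \in \cU(\ell)$ of density at least $\alpha q^{r(M)}$ with a large $\PG(m, q)$-minor $N = M / C \setminus D$ must contain either an $\AG(n, q)$-restriction or a $\PG(n, q')$-minor with $q' > q$. Each $e \in E(M) \setminus (C \cup D)$ projects to a point $\pi(e) \in E(N)$; pigeonholing on the $|E(N)|$-many images and then reducing via a Ramsey-type argument should produce a large homogeneous family $X \subseteq E(M)$ of ``parallel'' extensions of some structured substructure of $N$. A structural dichotomy then splits the cases: either $X$ is ``aligned'' with an affine subgeometry of $N$ and lifts, through careful contractions in $C$ and deletions in $D$, to a genuine $\AG(n, q)$-restriction of $M$; or $X$ contains a pair of elements which, together with the $\PG(m, q)$-frame, span an extra direction and force a line of length exceeding $q + 1$ in a suitable minor, which in turn (by a standard promotion lemma valid once $m$ is large) enlarges the projective minor to $\PG(n, q')$ for some $q' > q$.

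The main obstacle is this third step. The dichotomy must be executed in a way that preserves the ambient $\cU(\ell)$-hypothesis, so that neither the affine lifting nor the projective promotion accidentally creates a $U_{2, \ell+2}$-minor, and the homogeneous family $X$ must be arranged to be cleanly compatible with the contraction of $C$. Getting this to work is what forces $r(M)$ to be large: $f(\ell, n, q, \alpha)$ will grow like an iterated Ramsey-type function of its arguments.
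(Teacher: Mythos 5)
This is a result that the paper imports wholesale from reference~[\ref{gn2}] and does not prove itself; there is no ``paper's own proof'' to compare against. Evaluating your proposal on its merits, the first two steps are reasonable (extracting a large projective-geometry minor from the growth-rates machinery and eliminating the case $q'' < q$ by density), but the third step contains the real content of the theorem and your sketch of it does not work.

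The difficulty is that you propose to ``pigeonhole on the $|E(N)|$-many images and then reduce via a Ramsey-type argument''. But the hypothesis $W_1(M) \ge \alpha q^{r(M)}$ is a \emph{density} hypothesis, not a structural one: after fixing a $\PG(m,q)$-minor $N = M/C\setminus D$, the points of $M$ project to only about an $\alpha$-fraction of $E(N)$, and a constant fraction of $q^m$ points can be spread so evenly that ordinary pigeonholing produces no large homogeneous family at all. This is precisely the distinction between the Hales--Jewett theorem (a Ramsey/coloring statement) and the \emph{density} Hales--Jewett theorem. Your dichotomy cannot be executed by Ramsey-type reductions; it genuinely requires a density result. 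The key case you must confront is when $M$ has no $\PG(n,q')$-minor for any $q' > q$ and is (essentially) a dense restriction of $\PG(r-1,q)$: then there is no ``extra direction'' or long line to exploit, and finding an $\AG(n,q)$-\emph{restriction} inside a dense subset of $\GF(q)^r$ is exactly the density Hales--Jewett theorem for the combinatorial cube $[q]^n$ (Furstenberg--Katznelson, combinatorialized by Polymath). That theorem is the engine of the cited result, which is why [\ref{gn2}] is titled ``A density Hales-Jewett theorem for matroids,'' and nothing in your outline supplies a substitute for it. A secondary issue: ``aligned with an affine subgeometry of $N$ and lifts \dots to a genuine $\AG(n,q)$-restriction of $M$'' is also where the restriction-vs-minor gap is hidden; points projecting onto an affine subspace of a \emph{minor} need not form an affine geometry as a \emph{restriction} of $M$, and bridging that requires real work that your sketch leaves implicit.
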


We now consider the parameter $W_k(M)$, known as the \emph{$k$-th Whitney number of $M$ of the second kind}, and its value on projective geometries. It is well-known (see [\ref{oxley} p.162], for example) that $\PG(r-1,q)$ has exactly $\qbinom{r}{k}$ rank-$k$ flats, where $\qbinom{r}{k}$ is the `$q$-binomial coefficient' defined recursively by $\qbinom{r}{0} = \qbinom{r}{r} = 1$ and $\qbinom{r}{k} = q^k\qbinom{r-1}{k} + \qbinom{r-1}{k-1}$ for $0 < k < r$. An equivalent definition is given by 
\[\qbinom{r}{k} = \frac{(q^r-1)(q^{r-1}-1)\dotsc(q^{r-k+1}-1)}{(q^k-1)(q^{k-1}-1)\dotsc(q-1)}.\] 
	Using these definitions, it is not hard to show that $\qbinom{r}{k}$ satisfies a few basic properties, which we will use freely:

\begin{lemma}\label{qbinomproperties}
	For every prime power $q$ and all integers $0 < k < r$, the following hold:
	\begin{enumerate}
		\item\label{qb1} $\qbinom{r}{k} \ge q^{ki}\qbinom{r-i}{k}$ for all $i \in \{0, \dotsc, r\}$. 
		\item\label{qb2} $q^{k(r-k)} \le \qbinom{r}{k} \le q^{rk}$.
		\item\label{qb3} $\qbinom{r}{k} = \qbinom{r-1}{k} + q^{r-k}\qbinom{r-1}{k-1}.$
	\end{enumerate}
\end{lemma}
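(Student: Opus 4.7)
The plan is to prove the three items in order (3), (1), (2), since (3) is a clean algebraic identity, (1) follows from the original recursion and its iterate, and the bounds in (2) then drop out of (1) and the product formula.

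For part (3), I would simply verify the identity directly from the product formula. Factoring out $\frac{(q^{r-1}-1)\cdots(q^{r-k+1}-1)}{(q^{k-1}-1)\cdots(q-1)}$ from both $\qbinom{r-1}{k}$ and $q^{r-k}\qbinom{r-1}{k-1}$, the sum becomes
\[
\frac{(q^{r-1}-1)\cdots(q^{r-k+1}-1)}{(q^{k-1}-1)\cdots(q-1)}\cdot\left[\frac{q^{r-k}-1}{q^{k}-1}+q^{r-k}\right],
\]
and the bracketed term simplifies to $\frac{q^{r}-1}{q^{k}-1}$, yielding $\qbinom{r}{k}$. This is essentially a one-line calculation parallel to the standard recursion.

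For part (1), I would induct on $i$. The case $i=0$ is trivial. For $i=1$, the defining recursion $\qbinom{r}{k}=q^{k}\qbinom{r-1}{k}+\qbinom{r-1}{k-1}$ (stated right before the lemma) immediately gives $\qbinom{r}{k}\ge q^{k}\qbinom{r-1}{k}$, since $\qbinom{r-1}{k-1}\ge 0$. For larger $i$, just iterate: $\qbinom{r}{k}\ge q^{k}\qbinom{r-1}{k}\ge q^{2k}\qbinom{r-2}{k}\ge\cdots\ge q^{ik}\qbinom{r-i}{k}$, noting that the induction is valid as long as $r-i\ge k$; when $r-i<k$ the claim still holds because $\qbinom{r-i}{k}=0$ in that range, making the right-hand side $0$.

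For part (2), the lower bound $q^{k(r-k)}\le\qbinom{r}{k}$ is the specialization of part (1) to $i=r-k$, using $\qbinom{k}{k}=1$. The upper bound follows from the explicit product: each of the $k$ factors satisfies
\[
\frac{q^{r-j}-1}{q^{k-j}-1}\le q^{r-j},
\]
because $q^{k-j}-1\ge 1$ for $j\in\{0,\dots,k-1\}$. Multiplying gives $\qbinom{r}{k}\le q^{\,rk-\binom{k}{2}}\le q^{rk}$.

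None of the three steps poses a genuine obstacle; the only mild subtlety is making sure the iterated recursion in part (1) is handled correctly when $i$ exceeds $r-k$, which I would address by the boundary convention $\qbinom{s}{k}=0$ for $s<k$.
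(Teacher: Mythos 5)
Your proof is correct, and since the paper simply states that these properties follow easily from the two definitions (the recursion and the product formula) without giving a proof, your argument fills in exactly what the paper leaves to the reader, using exactly the intended tools. All three verifications check out: the algebraic manipulation for (3) reduces cleanly to $\frac{q^{r-k}-1}{q^k-1}+q^{r-k}=\frac{q^r-1}{q^k-1}$, the iteration of the defining recursion for (1) is valid (with the boundary convention $\qbinom{s}{k}=0$ for $s<k$ handling $i>r-k$), and (2) follows from (1) at $i=r-k$ together with a termwise bound on the product formula.
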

 We now consider $W_k(M)$ for a general matroid $M$. For each $e \in E(M)$ let $\cF_k(M;e)$ denote the set of rank-$k$ flats of $M$ containing $e$, and let $W_k^e(M) = W_k(M) - |\cF_k(M;e)|$ denote the number of rank-$k$ flats of $M$ \emph{not} containing $e$. We will also freely use some basic properties of $W_k$:
 	\begin{lemma}\label{sigmaproperties}
		If $k \ge 1$ and $\ell \ge 2$ are integers, $M$ is a matroid, and $e$ is a nonloop of $M$ then the following hold: 
		\begin{enumerate}
			\item\label{sp1} $W_k(M) \le W_1(M)^k$. 
			\item\label{sp2} $W_k(M) < \ell^{kr(M)}$ if $M \in \cU(\ell)$.
			\item\label{sp3} $|\cF_k(M;e)| = W_{k-1}(M \con e)$.
			\item\label{sp4} $W_k(M) = W_{k-1}(M \con e) + \sum_{F \in \cF_{k+1}(M;e)}W_k^e(M|F)$. 
		\end{enumerate}
	\end{lemma}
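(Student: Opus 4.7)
The plan is to prove the four parts in sequence, using the first two as straightforward counting arguments and the last two as standard manipulations with flats and contraction. I do not expect a serious obstacle in this lemma; each item reduces to a one-line argument once the right bijection is set up.

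For (\ref{sp1}), I would observe that every rank-$k$ flat of $M$ is the closure of some $k$ independent points (rank-$1$ flats) of itself. Sending each rank-$k$ flat to any one ordered spanning $k$-tuple of its points defines an injection into the set of ordered $k$-tuples of points, so $W_k(M) \le W_1(M)^k$. Part (\ref{sp2}) is then immediate: combining (\ref{sp1}) with the crude form $W_1(M) < \ell^{r(M)}$ of Theorem~\ref{kung} yields $W_k(M) \le W_1(M)^k < \ell^{kr(M)}$.

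For (\ref{sp3}), I would invoke the standard correspondence between flats of $M$ containing $e$ and flats of $M \con e$, namely $F \mapsto F \setminus \{e\}$. Because $e$ is a nonloop, this bijection drops rank by exactly $1$, so it restricts to a bijection between $\cF_k(M;e)$ and the set of rank-$(k-1)$ flats of $M \con e$; hence $|\cF_k(M;e)| = W_{k-1}(M \con e)$.

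For (\ref{sp4}), I would split the rank-$k$ flats of $M$ according to whether they contain $e$. Those that contain $e$ contribute $|\cF_k(M;e)| = W_{k-1}(M \con e)$ by (\ref{sp3}). For a rank-$k$ flat $F'$ with $e \notin F'$, the set $F = \cl_M(F' \cup \{e\})$ lies in $\cF_{k+1}(M;e)$, and fixing any $F \in \cF_{k+1}(M;e)$, the rank-$k$ flats $F'$ of $M$ with $\cl_M(F' \cup \{e\}) = F$ and $e \notin F'$ are exactly the rank-$k$ flats of $M|F$ that avoid $e$ — that is, there are $W_k^e(M|F)$ of them. Summing over $F$ gives the claimed identity. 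The only minor point to check is that every rank-$k$ flat of $M|F$ really is a flat of $M$, which is a one-line consequence of $F$ itself being a flat of $M$.
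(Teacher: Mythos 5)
Your proof is correct and takes essentially the same approach as the paper: injecting rank-$k$ flats into $k$-tuples of points for (\ref{sp1}), combining with Kung's bound for (\ref{sp2}), the standard flat correspondence under contraction for (\ref{sp3}), and partitioning rank-$k$ flats by whether they contain $e$ and mapping the non-containing ones to their closure with $e$ for (\ref{sp4}). The paper's version is just terser.
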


\begin{proof}
	(\ref{sp1}) follows from the fact that every rank-$k$ flat is spanned by $k$ points, and (\ref{sp2}) follows from (1) and Theorem~\ref{kung}. (\ref{sp3}) is easy. Now by (\ref{sp3}), there are $W_{k-1}(M \con e)$ rank-$k$ flats of $M$ containing $e$. For each other rank-$k$ flat $F'$ of $M$, the set $F = \cl_M(F' \cup \{e\})$ is the unique rank-$(k+1)$ flat of $M$ containing $e$ and $F'$, and each such $F$ corresponds to $W_k^e(M|F)$ different $F'$. Combining these statements gives (\ref{sp4}).
\end{proof}

\section{Geometry}

In this section, we deal with projective and affine geometries over $\GF(q)$, using them to provide a $U_{2,q^2+1}$-minor in various situations. We repeatedly use the fact that, if $M$ has an $\AG(r(M)-1,q)$-restriction $R$ and $e \in E(R)$, then $M \con e$ has a $\PG(r(M \con e)-1,q)$-restriction contained in $E(R)$. The first lemma we need was also essentially proved in [\ref{gn}].

\begin{lemma}\label{spanningpg}
	If $q$ is a prime power and $M$ is a simple matroid of rank at least $3$ with a proper $\PG(r(M)-1,q)$-restriction, then $M$ has a $U_{2,q^2+1}$-minor. 
\end{lemma}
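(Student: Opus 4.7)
The plan is to produce a rank-$3$ minor of $M$ isomorphic to $\PG(2,q)$ together with one additional element $e$, and then to contract $e$ to collapse that rank-$3$ geometry to a rank-$2$ minor with at least $q^2+1$ parallel classes. Write $N$ for the given $\PG(r(M)-1,q)$-restriction. Since $N$ spans $M$, any $e \in E(M) \setminus E(N)$ satisfies $e \in \cl_M(E(N))$, so I can let $F$ be a flat of $N$ of minimum rank $k$ with $e \in \cl_M(F)$. Simplicity of $M$ forces $k \geq 2$.

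Suppose first $k \geq 3$. I restrict to $F \cup \{e\}$ and contract a rank-$(k-3)$ subflat $G \subseteq F$ of $N$. Minimality of $F$ guarantees $e \notin \cl_M(G)$, and more generally $e \notin \cl_M(H)$ for every flat $H$ of $N$ with $G \subseteq H \subsetneq F$. After simplification the result is $\PG(2,q)$ plus one extra element $e$ lying on no line of the plane, and contracting $e$ leaves each point of $\PG(2,q)$ in its own parallel class, yielding $U_{2,q^2+q+1}$ and hence $U_{2,q^2+1}$ as a minor.

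The case $k=2$ is more delicate because $F \cup \{e\}$ is a line of $M$ with only $q+2$ points. The crucial sub-claim is that $F$ is the \emph{unique} line of $N$ whose $M$-closure contains $e$. Suppose $F' \neq F$ is a second such line. If $F$ and $F'$ share a point $h$ of $N$, then $\cl_M(F)$ and $\cl_M(F')$ are both rank-$2$ flats of $M$ containing $\{e,h\}$, hence both equal $\cl_M(\{e,h\})$, which forces $F = \cl_M(F) \cap E(N) = \cl_M(F') \cap E(N) = F'$. Otherwise $F,F'$ are skew in $N$; then for any $f' \in F'$, the rank-$2$ flat $\cl_M(\{e,f'\}) = F' \cup \{e\}$ sits inside $\cl_M(F \cup \{f'\})$, and intersecting with $E(N)$ (using that $N$ is a restriction, so $\cl_M(X) \cap E(N) = \cl_N(X)$ whenever $X \subseteq E(N)$) forces $F' \subseteq \cl_N(F \cup \{f'\})$, a rank-$3$ flat of $N$, contradicting $r_N(F \cup F') = 4$.

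Once uniqueness is in hand I pick any rank-$3$ flat $F_3$ of $N$ containing $F$ and restrict $M$ to $F_3 \cup \{e\}$, producing a rank-$3$ minor isomorphic to $\PG(2,q)$ plus one element on the unique line $F$. Contracting $e$ collapses the $q+1$ points of $F$ to a single parallel class, while uniqueness forces each of the $q^2$ points of $F_3 \setminus F$ into its own parallel class, giving exactly $U_{2,q^2+1}$. The main obstacle is the uniqueness sub-claim in the skew sub-case: it requires carefully using $M|E(N) = N$ to translate an $M$-containment into an $N$-containment, which then collapses the $N$-rank of $F \cup F'$. Everything else is routine once the correct rank-$3$ minor has been isolated.
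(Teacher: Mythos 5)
The proposal is correct and uses essentially the same strategy as the paper: reduce to a rank-$3$ minor consisting of $\PG(2,q)$ together with a single extra element $e$, and then contract $e$ to produce at least $q^2+1$ parallel classes. Your version separates the cases $k\ge 3$ and $k=2$ and spells out a proof of the fact (stated without justification in the paper) that $e$ lies in the $M$-closure of at most one line of the projective geometry, but the underlying argument is the same.
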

\begin{proof}
	Let $R$ be a $\PG(r(M)-1,q)$-restriction of $M$. We may assume that $E(M) = E(R) \cup \{e\}$ for some $e \notin E(R)$. The point $e$ is spanned by at most one line of $R$; by repeatedly contracting points not on such a line and simplifying we obtain a simple rank-$3$ minor of $M'$ such that $E(M') = E(R') \cup \{e\}$ and $R' \cong \PG(2,q)$. Now $e$ is spanned by at most one line of $R'$ and such a line contains $q+1$ elements of $E(R')$, so $W_1(M' \con e) \ge |E(R')| - q = q^2 + 1$, and so $M' \con e$ has a  $U_{2,q^2+1}$-restriction. 
	\end{proof}

	In particular, if $M$ has rank at least $3$, has a $\PG(r(M)-1,q)$-restriction and is not $\GF(q)$-representable then $M$ has a $U_{2,q^2+1}$-minor; we use this idea in the next two lemmas. 
	
	\begin{lemma}\label{win}
	Let $q$ be a prime power and $m \ge 2$ and $b \ge 1$ be integers. If $M$ is a matroid with an $\AG(m+b,q)$-restriction $R$, a rank-$m$ restriction $S$ that is not $\GF(q)$-representable, and every cocircuit of $M$ has rank at least $r(M)-b$, then $M$ has a $U_{2,q^2+1}$-minor. 
	\end{lemma}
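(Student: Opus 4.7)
The plan is to produce a rank-$(m+b+1)$ minor of $M$ retaining both an $\AG(m+b,q)$-restriction and a rank-$m$ non-$\GF(q)$-representable restriction, then contract a point of the $\AG$ and simplify to reach the hypotheses of Lemma~\ref{spanningpg}. I would first assume $M|S$ is simple (else replace $S$ by the ground set of $\si(M|S)$), and record that the cocircuit hypothesis yields $r_M(E(M)\setminus X) \ge r(M)-b$ whenever $r_M(X) < r(M)$, since every such $X$ lies in a hyperplane.

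The main step is to find an independent set $I \subseteq E' := E(M)\setminus(R\cup S^*)$ of size $t := r(M)-(m+b+1)$, where $S^* := \cl_M(S)$, that is both a basis of $M/R$ and independent in $M/S^*$; equivalently, $I$ is skew with both $R$ and $S$. I would produce $I$ via matroid intersection: for $A \subseteq E'$ and $Y := (R\setminus S^*)\cup A$, the sum $r_{M/R}(A) + r_{M/S^*}(E'\setminus A) = r_M(R\cup A)+r_M(E(M)\setminus Y) - (m+b+1) - m$ must be at least $t$ for all $A$. When $r_M(Y)<r(M)$, the cocircuit bound together with $r_M(R\cup A)\ge m+b+1$ gives enough; otherwise $R\cup A$ spans $M$, and the observation that $S^* \subseteq E(M)\setminus Y$ contributes a term of rank $m$ from the other summand. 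With $I$ in hand, $M' := M\con I$ has rank $m+b+1$, a spanning $\AG(m+b,q)$-restriction $R$ by skewness, and $M'|S = M|S$. Choosing $e \in R\setminus\cl_{M'}(S)$ (possible since $r_{M'}(R) > r_{M'}(S)$) and setting $M'' := M'\con e$, we have $\si(M''|(R\setminus\{e\})) \cong \PG(m+b-1,q)$ and $M''|S = M|S$.

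Let $\widetilde M := \si(M'')$, choosing each parallel-class representative from $R\setminus\{e\}$ whenever possible; then $\widetilde M$ has a spanning $\PG(m+b-1,q)$-restriction $R''$ of rank $m+b\ge 3$. To see $R''$ is a \emph{proper} restriction, suppose for contradiction that every $\beta\in S$ is parallel in $M''$ to some $\phi(\beta)\in R\setminus\{e\}$ (taking $\phi(\beta)=\beta$ if $\beta\in R$); injectivity of $\phi$ follows from simplicity of $M''|S=M|S$, and the parallel correspondence identifies $M''|S$ with a restriction of $M''|(R\setminus\{e\}) = \AG(m+b,q)/e$, forcing $M|S$ to be $\GF(q)$-representable, a contradiction. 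Hence some $\beta$ yields an element of $\widetilde M$ outside $R''$, and Lemma~\ref{spanningpg} delivers the desired $U_{2,q^2+1}$-minor. The main obstacle is the matroid intersection step supplying $I$; the crucial ingredient is that $S^* \subseteq E(M)\setminus Y$ always, providing the last $m$ units of slack in the rank sum.
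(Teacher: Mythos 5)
Your proof is correct, and the overall shape matches the paper's (reduce to the case where $R$ spans, contract a point of $R$ outside the closure of $S$, then invoke Lemma~\ref{spanningpg}), but the reduction step is done by a genuinely different method. The paper takes a minor-minimal $M$ satisfying the hypotheses; contraction of any $x \notin \cl_M(E(R)) \cup \cl_M(E(S))$ preserves the cocircuit property and leaves $R$ and $S$ intact, so minimality forces $E(M) = \cl_M(E(R)) \cup \cl_M(E(S))$, and then $E(M) - \cl_M(E(R))$, being a union of cocircuits contained in the rank-$m$ set $\cl_M(E(S))$, must be empty. You instead build the contraction set $I$ explicitly by matroid intersection applied to $(M/R)|E'$ and $(M/S^*)|E'$, and your rank inequality is verified correctly in both cases (non-spanning $Y$ uses the cocircuit bound, spanning $Y$ uses $S^* \subseteq E(M)\setminus Y$, and both give at least $t$). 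Your endgame also does by hand what the paper handles via the one-sentence remark after Lemma~\ref{spanningpg} (that rank $\ge 3$, a spanning $\PG$-restriction, and non-$\GF(q)$-representability together yield a $U_{2,q^2+1}$-minor): you check that the simplification has the $\PG$ as a proper restriction by showing that otherwise the injective parallel map $\phi$ would embed $M|S$ into a $\GF(q)$-representable matroid. Net effect: the paper's proof is shorter and slicker because the minor-minimality argument absorbs all the bookkeeping that your matroid-intersection computation makes explicit; your version is more constructive and shows directly where the cocircuit lower bound is used, but buys no extra generality. One small stylistic note: you don't actually need $I$ to be a \emph{basis} of $M/R$ so much as an independent transversal of both $M/R$ and $M/S^*$ of size $r(M)-(m+b+1)$; since $M/R$ has exactly that rank, being independent of that size in $M/R$ is the same as being a basis, but the phrasing ``basis of $M/R$'' slightly obscures the symmetry of the intersection argument.
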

\begin{proof} 
	We may assume that no minor of $M$ satisfies the hypotheses. Note that contracting elements of $M$ preserves the cocircuit property, 
so $E(M) = \cl_M(E(R)) \cup \cl_M(E(S))$. If $r(M) > r(R)$ then $E(M) - \cl_M(E(R))$ contains a cocircuit of $M$ of rank at most $r(S) = m < r(M) - b$, a contradiction. Therefore $R$ is spanning in $M$. 
Let $f \in E(R) - \cl_M(E(S))$; the matroid $M \con f$ has a $\PG(r(M \con f)-1,q)$-restriction, has rank at least $3$ and is not $\GF(q)$-representable, so has a $U_{2,q^2+1}$-minor by Lemma~\ref{spanningpg}. 
\end{proof}


\begin{lemma}\label{spanningwin}
	Let $q$ be a prime power and $k \ge 1$ be an integer. If $M$ is a matroid such that $r(M) \ge k+3$, $M$ has an $\AG(r(M)-1,q)$-restriction and $M$ has no $U_{2,q^2+1}$-minor, then  $W_k(M) \le \qbinom{r(M)}{k}$. 
\end{lemma}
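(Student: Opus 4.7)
The plan is to fix any $e \in E(R)$ (where $R$ denotes the $\AG(r(M)-1,q)$-restriction) and apply Lemma~\ref{sigmaproperties}(\ref{sp4}) to decompose $W_k(M)$; write $r = r(M)$ throughout. The restriction $R/e$ is spanning in $M/e$ and $\si(R/e) \cong \PG(r-2,q)$. Since $r(\si(M/e)) = r-1 \ge k+2 \ge 3$, if this projective restriction of $\si(M/e)$ were proper then Lemma~\ref{spanningpg} would yield a $U_{2,q^2+1}$-minor of $M$; thus $\si(M/e) \cong \PG(r-2,q)$. In particular
\[
W_{k-1}(M/e) = \qbinom{r-1}{k-1} \quad\text{and}\quad |\cF_{k+1}(M;e)| = W_k(M/e) = \qbinom{r-1}{k}.
\]

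The crucial step is to show that $M|F$ is $\GF(q)$-representable for every $F \in \cF_{k+1}(M;e)$. Because $r(F) = k+1 < r$ (using $r \ge k+3$), we can pick $f \in E(R) \setminus F$. Since $F$ is a flat, $f \notin \cl_M(F)$, so $(M/f)|F = M|F$. Running the same Lemma~\ref{spanningpg} argument with $f$ in place of $e$ gives $\si(M/f) \cong \PG(r-2,q)$, which is $\GF(q)$-representable; hence $M/f$, and therefore its restriction $M|F$, is $\GF(q)$-representable as well.

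Now $\si(M|F)$ embeds as a restriction of $\PG(k,q)$, with $e$ corresponding to a specific point. The rank-$k$ flats of $M|F$ coincide with those of $\si(M|F)$, and each such flat $H$ maps injectively via $H \mapsto \cl_{\PG(k,q)}(H)$ to a hyperplane of $\PG(k,q)$, because $\cl_{\PG(k,q)}(H) \cap E(\si(M|F)) = H$ when $H$ is already a flat of the restriction. Moreover, if $e \notin H$ then $e \notin \cl_{\PG(k,q)}(H)$, else the same intersection would force $e \in H$. Thus rank-$k$ flats of $M|F$ avoiding $e$ inject into the $q^k$ hyperplanes of $\PG(k,q)$ missing $e$, so $W_k^e(M|F) \le q^k$.

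Combining these bounds via Lemma~\ref{sigmaproperties}(\ref{sp4}),
\[
W_k(M) = W_{k-1}(M/e) + \sum_{F \in \cF_{k+1}(M;e)} W_k^e(M|F) \le \qbinom{r-1}{k-1} + q^k \qbinom{r-1}{k} = \qbinom{r}{k},
\]
using the $q$-Pascal recurrence from the introduction. The main obstacle is verifying the local $\GF(q)$-representability of each $M|F$; once that is established, the rest is a straightforward term-by-term comparison with the projective geometry.
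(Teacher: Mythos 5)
Your proof is correct, and it takes a genuinely different route from the paper's. The paper partitions the rank-$k$ flats of $M$ according to whether they meet $E(R)$: it shows (Claim~\ref{fc}) that every flat meeting $E(R)$ has a basis inside $E(R)$, so those flats number at most $W_k(R) = W_k(\AG(r-1,q))$, and (Claim~\ref{sc}) that flats disjoint from $E(R)$ survive as rank-$k$ flats of $M/e \del (E(R)-\{e\})$, which by Lemma~\ref{spanningpg} is a restriction of $\PG(r-2,q)$; adding gives $W_k(\AG(r-1,q)) + W_k(\PG(r-2,q)) = \qbinom{r}{k}$. You instead invoke the recursion of Lemma~\ref{sigmaproperties}(\ref{sp4}) at a point $e$ of $R$, exactly as the paper does later in Claim~\ref{contractlowerbound} of the main theorem, and supply the two needed ingredients: $\si(M/e)\cong \PG(r-2,q)$ via Lemma~\ref{spanningpg} (giving $W_{k-1}(M/e) = \qbinom{r-1}{k-1}$ and $|\cF_{k+1}(M;e)| = \qbinom{r-1}{k}$), and local $\GF(q)$-representability of each $M|F$ obtained by passing to $M/f$ for some $f\in E(R)\setminus F$ (so that $W_k^e(M|F)\le q^k$). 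This matches $\qbinom{r}{k}$ via the other $q$-Pascal recurrence, $\qbinom{r}{k}=q^k\qbinom{r-1}{k}+\qbinom{r-1}{k-1}$, rather than the affine/projective split the paper uses.

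Both arguments are sound, and yours arguably fits more cleanly with the rest of the paper since it reuses the $\cF_{k+1}(M;e)$ decomposition that drives the main induction. One small presentational note: you should say explicitly that $M$ may be taken simple (the paper does), since you use simplicity implicitly when you assert that $M/f$ is loopless and hence $\GF(q)$-representable once its simplification is, and when you treat $e$ as a point of $\si(M|F)$. With that line added, the argument is complete.
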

\begin{proof}
	Let $R$ be an $\AG(r(M)-1,q)$-restriction of $M$. We may assume that $M$ is simple. We make two claims, considering two different types of rank-$k$ flat. 
	
	\begin{claim}\label{fc}
		If $F$ is a flat of $M$ with $F \cap E(R) \ne \varnothing$, then $F$ has a basis contained in $E(R)$. 
	\end{claim}
	\begin{proof}[Proof of claim:]
	For each $e \in E(R)$, the matroid $M \con e$ has rank at least $3$ and has a $\PG(r(M)-2,q)$-restriction contained in $E(R)-\{e\}$, so it follows from Lemma~\ref{spanningpg} that, for every $e \in E(R)$, each nonloop of $M \con e$ is parallel in $M \con e$ to some element of $E(R)-\{e\}$. Therefore every $x \in E(M)$ is in some line of $M$ containing $e$ and another element $y$ of $E(R)$. Thus, if $F$ is a flat of $M$ and $e \in F \cap E(R)$, then $F$ has a basis contained in $E(R)$, as we can include $e$, and then can exchange each $x \in F - E(R)$ with its corresponding $y \in E(R)$. 
	\end{proof}
	
	\begin{claim}\label{sc}
		If $F$ is a rank-$k$ flat of $M$ such that $F \cap E(R) = \varnothing$, then $F$ is a rank-$k$ flat of $M \con e\del (E(R)-\{e\})$ for all $e \in E(R)$. 
	\end{claim}
	\begin{proof}[Proof of claim:]
	Let $F$ be a rank-$k$ flat of $M$ that is disjoint from $E(R)$ and let $e \in E(R)$. Let $F' = \cl_M(F \cup \{e\})$. By the first claim, $F'$ contains a rank-$(k+1)$ flat $G$ of $R$; note that $R|G \cong \AG(k,q)$. If $F' = F \cup G$ then the claim holds. Otherwise, $F' \ne F \cup G$ and $F'$ is the disjoint union of a rank-$(k+1)$ affine geometry, a rank-$k$ flat, and at least one other point, so $M|F'$ is not $\GF(q)$-representable. Let $f \in E(R) - F'$. The matroid $M \con f$ has rank at least $3$, has a $\PG(r(M\con f)-1,q)$-restriction contained in $E(R)$ and has $M|F'$ as a restriction, so Lemma~\ref{spanningpg} gives a contradiction. 
	\end{proof}
	Let $e \in E(R)$. By~\ref{fc}, the number of rank-$k$ flats of $M$ that intersect $E(R)$ is $W_k(R)$. By~\ref{sc}, the number of other rank-$k$ flats of $M$ is at most $W_k(M \con e \del E(R))$. Now $M \con e$ has rank at least $3$ and has a $\PG(r(M)-2,q)$-restriction, so we may assume by Lemma~\ref{spanningpg} that $\si(M \con e) \cong \PG(r(M)-2,q)$ and so $M \con e \del E(R)$ is $\GF(q)$-representable. Therefore \begin{align*}
	W_k(M) &\le W_k(R) + W_k(M \con e \del E(R)) \\
	& \le W_k(\AG(r(M)-1,q)) + W_k(\PG(r(M)-2),q).
\end{align*}
	This upper bound is clearly equal to $W_k(\PG(r(M)-1,q)) = \qbinom{r}{k}$. 
	
	\end{proof}

\section{The Main Theorem}

We now restate and prove Theorem~\ref{main}. 

\begin{theorem}
	There is a function $g: \bZ^2 \rightarrow \bZ$ so that, for all integers $\ell \ge 2$ and $k \ge 0$, if $M \in \cU(\ell)$ satisfies $r(M) \ge g(\ell,k)$ then $W_k(M) \le \qbinom{r(M)}{k}$, where $q$ is the largest prime power not exceeding $\ell$. 
\end{theorem}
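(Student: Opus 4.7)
Plan: The proof is by induction on $k$, with base cases $k=0$ trivial (since $W_0(M)=1=\qbinom{r}{0}$) and $k=1$ the main result of~[\ref{gn}]. Throughout the inductive step I will use that $q^2+1\ge\ell+2$, which follows (by explicit checks for small $\ell$ and a prime-gap estimate for larger $\ell$) from $q$ being the largest prime power not exceeding $\ell$; in particular $M\in\cU(\ell)$ has no $U_{2,q^2+1}$-minor, the standing hypothesis of Section~3.

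For the inductive step at level $k\ge 2$, fix $M\in\cU(\ell)$ of rank $r$ taken large in $\ell$, $k$, and $g(\ell,k-1)$. Apply Theorem~\ref{hj} with density $\alpha:=q^{-k}$ and a large auxiliary rank $n$. Any $\PG(n,q')$-minor with $q'>q$ would contain a $U_{2,q'+1}$-minor, and $q'+1\ge\ell+2$ would contradict $M\in\cU(\ell)$; hence Theorem~\ref{hj} yields either (a) $W_1(M)<q^{r-k}$, or (b) $M$ has an $\AG(n,q)$-restriction $R_0$.

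In case (a), Lemma~\ref{sigmaproperties}(\ref{sp1}) together with Lemma~\ref{qbinomproperties}(\ref{qb2}) gives
\[W_k(M)\le W_1(M)^k<q^{k(r-k)}\le\qbinom{r}{k}\]
immediately. In case (b), I invoke Lemma~\ref{sigmaproperties}(\ref{sp4}) with some $e\in E(R_0)$:
\[W_k(M)=W_{k-1}(M/e)+\sum_{F\in\cF_{k+1}(M;e)}W_k^e(M|F).\]
The outer inductive hypothesis bounds the first term by $\qbinom{r-1}{k-1}$. A secondary induction on $r$ (with $k$ fixed) bounds $|\cF_{k+1}(M;e)|=W_k(M/e)\le\qbinom{r-1}{k}$. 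The pointwise bound $W_k^e(M|F)\le q^k$ for each rank-$(k+1)$ flat $F\ni e$ would follow once $M|F$ is known to be $\GF(q)$-representable, since then $M|F$ embeds in $\PG(k,q)$ and the number of hyperplanes of $\PG(k,q)$ missing a given point is exactly $q^k$. That representability is the intended payoff of Lemma~\ref{win} applied contrapositively: the presence of the $\AG(n,q)$-restriction $R_0$ together with the absence of $U_{2,q^2+1}$-minors forces rank-$(k+1)$ restrictions of $M$ to be $\GF(q)$-representable. Assembling these bounds via the defining recurrence $\qbinom{r}{k}=q^k\qbinom{r-1}{k}+\qbinom{r-1}{k-1}$ then closes the inductive step.

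The main obstacle is the application of Lemma~\ref{win} in case (b): its cocircuit condition need not hold for $M$ a priori, so the argument requires a sub-case analysis on low-rank cocircuits, probably handled by passage to an appropriate minor and invocation of the inductive hypothesis there. The secondary technical difficulty is calibrating $\alpha$, $n$, and the threshold $g(\ell,k)$ so that both the outer induction on $k$ and the inner induction on $r$ close cleanly.
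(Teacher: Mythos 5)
Your outline has the right ingredients — outer induction on $k$, Theorem~\ref{hj} to produce an affine geometry, the identity from Lemma~\ref{sigmaproperties}(\ref{sp4}), and Lemma~\ref{win} to force $\GF(q)$-representability of small restrictions — and you correctly flag that the cocircuit hypothesis of Lemma~\ref{win} does not come for free. But there are two genuine gaps, one of which you do not flag. The cocircuit issue is real: the paper devotes a substantial claim (Claim~\ref{strongconn}) to establishing, via a delicate counting estimate, that a suitably chosen minor $M$ has every cocircuit of rank at least $r-b$. More seriously, your ``secondary induction on $r$'' to bound $W_k(M\con e)\le\qbinom{r-1}{k}$ is circular at its base: the theorem asserts the bound for all ranks $\ge g(\ell,k)$, so when $r(M)=g(\ell,k)$ there is no smaller rank covered by any inductive hypothesis, and a direct argument is needed precisely where the density could be largest. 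Since $g(\ell,k)$ is to be chosen as part of the proof, you cannot simply declare the base case handled by ``calibration.''

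The paper's proof is organized quite differently to sidestep both problems. From a hypothetical counterexample $M_0$ it extracts a minor $M$ that is minimal subject to $W_k(M)>2^{r(M_0)-r(M)}\qbinom{r(M)}{k}$; the extra factor $2^{r(M_0)-r(M)}$ yields the decay bound $W_k(M')<(2q^{-k})^{r-r(M')}W_k(M)$ for every proper minor $M'$, which is what powers both Claim~\ref{decency} and the cocircuit estimate — with no induction on rank. It then takes a further minor $N$ minimal with respect to three conditions (the $\AG(r_0+b,q)$-restriction, the cocircuit property, and $W_k(N)>\qbinom{r(N)}{k}$). Crucially, in Claim~\ref{contractlowerbound} the $W_k^e(N|F)\le q^k$ bound is used in the opposite direction from your plan: it establishes a \emph{lower} bound $W_k(N\con e)>\qbinom{r(N)-1}{k}$, showing the density property is inherited by contractions outside $\cl_N(E(R))$; minimality then forces the affine geometry to span $N$, and Lemma~\ref{spanningwin} — not the $q$-binomial recurrence — delivers the final contradiction. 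So while your ingredients are the same, your assembly step would not close as written.
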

\begin{proof}
	Set $g(\ell,0) = 0$ for all $\ell$; note that this trivially satisfies the conditions of the theorem. Let $\ell \ge 2$ and $k > 0$ be integers, and $q$ be the largest prime power such that $q \le \ell$. If $\ell = 2$ then $M$ is binary and the bound is obvious; we may therefore assume that $\ell \ge q \ge 3$. Suppose recursively that $g(\ell,i)$ has been defined for each $i \in \{0, \dotsc, k-1\}$.  Let $r_0  = \max(k+3,\max_{0 \le i \le k - 1}g(\ell,i))$. Note that $2q^{-k} \le \tfrac{2}{3}$; let $b$ be a positive integer so that $kq^{k^2-b} + (2 q^{-k})^{b+1} \le \tfrac{1}{6}\ell^{-k(k+1)}$.  
	Recall that the function $f$ was defined in Theorem~\ref{hj}; set $g(\ell,k)$ to be an integer $n$ such that $q^{-k^2}2^n > \ell^{k f(\ell,r_0+b,q,q^{-k})}$. 
	
	Suppose inductively that $g(\ell,k-1)$ satisfies the theorem statement. If $g(\ell,k)$ does not, then there exists $M_0 \in \cU(\ell)$ such that $r(M_0) \ge n$ and $W_k(M_0) > \qbinom{r(M_0)}{k}$. We will obtain a contradiction by finding a $U_{2,\ell+2}$-minor of $M$; since $q^2 + 1 \ge \ell + 2$ it is also enough to find a $U_{2,q^2+1}$-minor.  
	
	 Let $M$ be minor-minimal such that $M$ is a minor of $M_0$ and $W_k(M) > 2^{r(M_0) - r(M)}\qbinom{r(M)}{k}$. Note that $M$ is simple; let $r = r(M)$. We often use the fact that $W_k(M') < (2 q^{-k})^{r-r(M')}W_k(M)$ for each proper minor $M'$ of $M$, which follows from minimality and (\ref{qb1}) of Lemma~\ref{qbinomproperties}. 
	\begin{claim}\label{decency}
	 $M$ has an $\AG(r_0+b,q)$-restriction.
	\end{claim}
	\begin{proof}[Proof of claim:]
		
		Observe that \[W_k(M) > 2^{r(M_0) - r}\tqbinom{r}{k} \ge 2^{n-r}q^{k(r-k)} = q^{-k^2}2^{n}(q^k/2)^r > \ell^{kf(\ell,r_0+b,q,q^{-k})},\] so $r > f(\ell,r_0+b,q,q^{-k}).$ By choice of $M$ and Lemmas~\ref{qbinomproperties} and~\ref{sigmaproperties} we have $W_1(M)^k \ge W_k(M) > \qbinom{r}{k} \ge q^{k(r-k)}$, so $W_1(M) \ge q^{-k}q^r$. The required restriction exists by Theorem~\ref{hj}, since $\PG(r_0+b,q')$ has a $U_{2,\ell+2}$-minor for all $q' > q$. 
	\end{proof}
			
		\begin{claim}\label{strongconn}
			Every cocircuit of $M$ has rank at least $r-b$. 
		\end{claim}
		\begin{proof}[Proof of claim:] Suppose not; let $C$ be a cocircuit of $M$ of rank less than $r-b$, let $H$ be the hyperplane $E(M)-C$, and let $B$ be a rank-$(r-b)$ set containing $C$. Note that $E(M) = H \cup B$.
					
			Let $e \in C$; note that the matroid $M\con e$ has no loops and that $r((M \con e)|(B-e)) = r - (b+1) \ge r_0$.  Let $\cF_B$ be the collection of rank-$k$ flats of $M \con e$ that intersect $B$. Each $F \in \cF_B$ is the closure of the union of a rank-$i$ flat of $(M \con e)|(B-\{e\})$ and a rank-$(k-i)$ flat of $(M\con e)|H$ for some $i \in \{1, \dotsc, k\}$, so 
			\begin{align*}
				|\cF_B| &\le \sum_{i = 1}^{k-1} W_i((M \con e)|(B-e))W_{k-i}((M \con e)|H) + W_k((M \con e)|(B-e))\\
				&\le \sum_{i = 1}^{k-1} \qbinom{r-b-1}{i} \qbinom{r-1}{k-i} + (2 q^{-k})^{b+1}W_k(M) \\
				& \le \sum_{i = 1}^{k-1} q^{i(r-b-1) + (k-i)(r-1)} + (2 q^{-k})^{b+1}W_k(M)\\
				& \le k q^{-b}q^{k(r-1)} + (2 q^{-k})^{b+1}W_k(M) \\
				& \le k q^{k^2-b}\qbinom{r}{k} + (2 q^{-k})^{b+1}W_k(M) \\
				&< \left(kq^{k^2-b} + (2 q^{-k})^{b+1}\right)W_k(M)\\
				&\le \tfrac{1}{6}\ell^{-k(k+1)}W_k(M).
			\end{align*}	
			For each rank-$k$ flat $F_0$ of $M \con e$ that is not in $\cF_B$, we have $F_0 \subseteq H$ so $(M \con e)|F_0 = M|F_0$. The closure in $M$ of $F = F_0 \cup \{e\}$ contains no elements of $B- \{e\}$, so $F \in \cF_{k+1}(M;e)$ and $W_k^e(M|F) = 1$. For each other $F \in \cF_{k+1}(M;e)$ we have $W_k^e(M|F) < \ell^{k(k+1)}$ by Lemma~\ref{sigmaproperties}. Therefore
			\begin{align*}
				\sum_{F \in \cF_{k+1}(M;e)} W_k^e(M|F)  &\le \ell^{k(k+1)}|\cF_B| + (W_k(M \con e) - |\cF_B|) \\ 
				& < \ell^{k(k+1)}|\cF_B| + 2 q^{-k} W_k(M) \\ 
				& \le \ell^{k(k+1)}\left(\tfrac{1}{6}\ell^{-k(k+1)}W_k(M)\right) + \tfrac{2}{3} W_k(M)\\
				& = \tfrac{5}{6}W_k(M).
			\end{align*}
			
		Now,  since $r(M\con e) \ge r_0$, by what is above we have
		\begin{align*}
			W_k(M) &= W_{k-1}(M \con e) + \sum_{F \in \cF_{k+1}(M;e)}W_k^e(M|F) \\
			& < \qbinom{r-1}{k-1} + \tfrac{5}{6}W_k(M)\\
			&< q^{k-r}\qbinom{r}{k} + \tfrac{5}{6}W_k(M),
		\end{align*}
		a contradiction, as $\tqbinom{r}{k} < W_k(M)$ and $q^{k-r} \le q^{k-r_0} \le q^{-3} < \tfrac{1}{6}$.
		\end{proof}	
			
		Let $N$ be a minor-minimal minor of $M$ such that 
		\begin{enumerate}
			\item\label{rest} $N$ has an $\AG(r_0+b,q)$-restriction, 
			\item\label{cocir} every cocircuit of $N$ has rank at least $r(N) - b$, and
			\item\label{dens} $W_k(N) > \qbinom{r(N)}{k}. $	
		\end{enumerate}	
		Let $R$ be an $\AG(r_0+b,q)$-restriction of $N$. Since $r_0 \ge k+1$, we may assume by \ref{decency}, \ref{strongconn} and Lemma~\ref{win} that every rank-$(k+1)$-restriction of $N$ is $\GF(q)$-representable. Note that $N$ has no loops.  
			
			\begin{claim}\label{contractlowerbound}	
			$W_k(N \con e) > \qbinom{r(N)-1}{k}$ for all $e \in E(N)$.	\end{claim}
		\begin{proof}[Proof of claim:]
		
		Since every rank-$(k+1)$ restriction of $N$ is $\GF(q)$-representable, the value of $W_k^e(N|F)$ for each rank-$(k+1)$ flat $F$ does not exceed $q^k$, its value on $\PG(k,q)$. Therefore
		$\sum_{F \in \cF_{k+1}(N;e)}W_k^e(N|F) \le q^k|\cF_{k+1}(N;e)| = q^kW_k(N \con e),$ and so by (4) of Lemma~\ref{sigmaproperties} we get 
		$W_k(N) \le W_{k-1}(N \con e) + q^kW_k(N \con e).$ Now $r(N \con e) \ge r_0$ so $W_{k-1}(N \con e) \le \qbinom{r(N)-1}{k-1}$ by the inductive hypothesis, and $W_k(N) > \qbinom{r(N)}{k}$, which implies that $W_k(N \con e) > q^{-k}\left(\qbinom{r(N)}{k} - \qbinom{r(N)-1}{k-1}\right) = \qbinom{r(N)-1}{k}.$ 
		\end{proof}
		
		Thus, properties (\ref{rest}) and (\ref{cocir}) and (\ref{dens}) are all preserved by contracting elements of $E(N) - \cl_N(E(R))$, so it follows from minimality that $R$ is spanning in $N$. We now obtain a contradiction from Lemma~\ref{spanningwin}. 
		\end{proof}
			
\section{Acknowledgements}

I thank Joe Bonin, Jim Geelen, Joseph Kung and the referees for their very useful feedback on the manuscript. 

\section{References}

\newcounter{refs}

\begin{list}{[\arabic{refs}]}
{\usecounter{refs}\setlength{\leftmargin}{10mm}\setlength{\itemsep}{0mm}}

\item\label{gn}
J. Geelen, P. Nelson, 
The number of points in a matroid with no $n$-point line as a minor, 
J. Combin. Theory. Ser. B 100 (2010), 625--630.

\item\label{gn2}
J. Geelen, P. Nelson, 
A density Hales-Jewett theorem for matroids, 
arXiv:1210.4522 [math.CO].

\item\label{kungextremal}
J.P.S. Kung,
Extremal matroid theory, in: Graph Structure Theory (Seattle WA, 1991), 
Contemporary Mathematics 147 (1993), American Mathematical Society, Providence RI, ~21--61.

\item \label{oxley}
J. G. Oxley, 
Matroid Theory, Second edition,
Oxford University Press, New York, 2011.

\end{list}
\end{document}